\title[Proving some identities of Gosper...]{Proving some identities of Gosper on $q$-trigonometric functions}
 \theoremstyle{definition}
  \theoremstyle{plain}
  \newtheorem{theorem}    {Theorem}
  \newtheorem{corollary}  {Corollary}
  \newcommand{\te}{\theta}
  \newcommand{\fr}{\frac}
  \newcommand{\lt}{\left(}
  \newcommand{\rt}{\right)}
\begin{document}
  \author{Mohamed El Bachraoui}
 \email{melbachraoui@uaeu.ac.ae}
 \keywords{$q$-trigonometric functions; elliptic functions, theta function identities}
 \subjclass{33E05, 11F11, 11F12}
  \begin{abstract}
  Gosper introduced the functions $\sin_q z$ and $\cos_q z$ as $q$-analogues for the trigonometric functions $\sin z$ and $\cos z$ respectively.
  He stated but did not prove a variety of identities involving these two $q$-trigonometric functions.
  In this paper,
  we shall use the theory of elliptic functions to prove three formulas from the list of Gosper on the functions $\sin_q z$ and $\cos_q z$.
  \end{abstract}
  \date{\textit{\today}}
  \maketitle
\section{Introduction}
\noindent
Throughout the paper let $q=e^{\pi i\tau}$ with $\mathrm{Im}(\tau)>0$, let $\tau'=-\frac{1}{\tau}$, and let $p=e^{\pi i\tau'}$. Note that the assumption $\mathrm{Im}(\tau)>0$ guarantees
that $|q|<1$ and $|p|<1$.
For a complex variable $a$, the $q$-shifted factorials are given by
\[
(a;q)_0= 1,\quad (a;q)_n = \prod_{i=0}^{n-1}(1-a q^i),\quad
(a;q)_{\infty} = \lim_{n\to\infty}(a;q)_n
\]
and for brevity let
\[
(a_1,\ldots,a_k;q)_n = (a_1;q)_n\cdots (a_k;q)_n,\quad
(a_1,\ldots,a_k;q)_{\infty} = (a_1;q)_{\infty} \cdots (a_k;q)_{\infty}.
\]
The first Jacobi theta function is defined
as follows:
\[
\theta_1(z,q)=\theta_1(z \mid \tau) = 2\sum_{n=0}^{\infty}(-1)^n q^{(2n+1)^2/4}\sin(2n+1)z,
\]
which has the following very useful infinite product representations
\[
\theta_1(z \mid \tau)=
i q^{\frac{1}{4}}e^{-iz} (q^2 e^{-2iz},e^{2iz},q^2; q^2)_{\infty} = 2 q^{\frac{1}{4}} (\sin z) (q^2 e^{2iz}, q^2 e^{-2iz},q^2; q^2)_{\infty}.
\]
See Whittaker~and~Watson~\cite[p. 469]{Whittaker-Watson} and
Gasper~and~Rahman~\cite[p. 15]{Gasper-Rahman}.
For the purpose of this work we will need the following basic properties of the function
$\theta_1(z \mid \tau)$
which can be derived straightforwardly by the definitions.
\begin{align}\label{theta-1}
\theta_1(k\pi\mid\tau) &= 0 \quad (k\in\mathbb{Z}), \nonumber \\
\theta_1 \lt \fr{\pi}{2}\bigm| \tau \rt &= 2 q^{\fr{1}{4}}(-q^2;q^2)_{\infty}^2 (q^2;q^2)_{\infty}, \nonumber \\
 \theta_1(z+\pi \mid \tau) &= -\theta_1(z\mid \tau) = \te_1(-z\mid \tau),  \\
\theta_1(z+\pi\tau \mid \tau) &= -q^{-1} e^{-2iz} \theta_1(z \mid \tau), \nonumber \\
\te_1'(0\mid\tau) &= 2 q^{\fr{1}{4}}(q^2;q^2)^3. \nonumber
\end{align}
Moreover, one can show that for any positive integer $k$ we have
 \begin{equation} \label{Transf}
 \te_1 \left( z+\pi\tau \bigm| \frac{\tau}{k} \right) = (-1)^k q^{-k}e^{-2kiz}
 \te_1 \left(z\bigm| \frac{\tau}{k} \right).
 \end{equation}
Further, Jacobi's imaginary transformation for the function $\theta_1(z \mid \tau)$ states that
\begin{equation}\label{ImTransf}
\theta_1(z\mid \tau) = (-i\tau)^{-\frac{1}{2}} (-i) e^{\frac{i\tau' z^2}{\pi}}\theta_1(z\tau' \mid \tau').
\end{equation}
See \cite[p. 475]{Whittaker-Watson}.
Differentiating (\ref{ImTransf}) with respect to $z$ and combining with the basic properties
(\ref{theta-1}) we find
\begin{equation} \label{key-derivative}
\te_1'(0\mid \tau') = (-i\tau)^{\fr{3}{2}} 2 q^{\fr{1}{4}} (q^2;q^2)^3.
\end{equation}
Gosper~\cite{Gosper} introduced $q$-analogues of $\sin(z)$ and $\cos(z)$ as follows:
\[
\sin_q (\pi z) = q^{(z-1/2)^2}\prod_{n=1}^{\infty}
\frac{(1-q^{2n-2z})(1-q^{2n+2z-2})}{(1-q^{2n-1})^2} =
q^{(z-\frac{1}{2})^2} \frac{(q^{2z},q^{2-2z};q^2)_{\infty}}{(q;q^2)_{\infty}^2},
\]
\[
\cos_q (\pi z) = q^{z^2}\prod_{n=1}^{\infty}
\frac{(1-q^{2n-2z-1})(1-q^{2n+2z-1})}{(1-q^{2n-1})^2}
= q^{z^2} \frac{(q^{1+2z},q^{1-2z};q^2)_{\infty}}{(q;q^2)_{\infty}^2}.
\]
It is easy to see that $\cos_q (z) = \sin_q (\pi/2 - z)$.
Gosper proved a variety of identities involving  these two functions including
the following
\begin{equation*}
\sin_q (z) = \frac{\theta_1(z,p)}{\theta_1\left( \frac{\pi}{2},p \right)}\quad \text{where\ }
(\ln p) (\ln q) = \pi^2,
\end{equation*}
which is readily seen to be equivalent to
\begin{equation}\label{sine-theta}
\sin_q (z) = \frac{\theta_1(z\mid \tau')}{\theta_1\left( \frac{\pi}{2}\bigm| \tau' \right)}.
\end{equation}
Clearly the formula (\ref{sine-theta}) combined with the
identities $\cos_q (z) = \sin_q (\pi/2 - z)$ and $\theta_1(z+\pi)=-\theta_1(z)$  yield
\begin{equation}\label{cosine-theta}
\cos_q (z) = \frac{\theta_1\left( z+\frac{\pi}{2},p \right)}{\theta_1 \left( \frac{\pi}{2},p\right)}
 = \frac{\theta_1\left( z+\frac{\pi}{2} \bigm| \tau' \right)}{\theta_1 \left( \frac{\pi}{2} \bigm| \tau' \right)}.
\end{equation}
See Gosper~\cite[p. 98]{Gosper}.
The author also stated without proof many identities based on a computer algebra facility called MACSYMA. Among the
formulas, we find the following which we will mark with the same labels as in
 Gosper~\cite{Gosper}.
\[   \label{q-Double} \tag{$q$-Double}
\sin_q(2z) = \frac{1}{2}\frac{\Pi_q}{\Pi_{q^4}} \sqrt{(\sin_{q^4}z)^2- (\sin_{q^2}z)^4 },
\]
\[
\label{q-Double-3} \tag{$q$-Double$_3$}
\cos_q(2z) = (\cos_{q^2} z)^2 - (\sin_{q^2} z)^2,
\]
\[ \label{q-Triple} \tag{$q$-Triple}
\sin_q(3z) = \frac{1}{3}\frac{\Pi_q}{\Pi_{q^9}} \sin_{q^9}z - \left(1+\frac{1}{3}\frac{\Pi_q}{\Pi_{q^9}}\right) (\sin_{q^3}z)^3,
\]
and
\[ \label{q-Triple-2} \tag{$q$-Triple$_2$}
\sin_q(3z) = \frac{\Pi_q}{\Pi_{q^3}} (\cos_{q^3} z)^2 \sin_{q^3}z -  (\sin_{q^3}z)^3,
\]
where
\begin{equation}\label{Piq}
\Pi_q = q^{\frac{1}{4}}\frac{ (q^2;q^2)_{\infty}^2}{(q;q^2)_{\infty}^2}.
\end{equation}
As it was observed by Gosper, it is easy to verify that (\ref{q-Double}) is equivalent to
\[   \label{q-Double-4} \tag{$q$-Double$_4$}
\cos_q(2z) = \frac{1}{2}\frac{\Pi_q}{\Pi_{q^4}} \sqrt{(\cos_{q^4}z)^2- (\cos_{q^2}z)^4 }
\]
and that by a combination of (\ref{q-Double}), (\ref{q-Double-3}, and (\ref{q-Double-4}) we have
\[   \label{q-Double-5} \tag{$q$-Double$_5$}
\cos_q(2z) = (\cos_{q}z)^4- (\sin_{q}z)^4.
\]
See Gosper~\cite[p. 89--93]{Gosper}.
We notice that many of Gosper's formulas are $q$-analogues for well-known trigonometric functions. For instance, after introducing the function
$\cos_q z$ as in (\ref{cosine-theta}), Gosper showed that
\[ \label{q-Double-2} \tag{$q$-Double$_2$}
\sin_q(2z) = \frac{\Pi_q}{\Pi_{q^2}} \sin_{q^2}z \cos_{q^2}z,
\]
which is clearly a $q$-analogue of the famous formula $\sin 2z = 2 \cos z \sin z$.
Recently, Mez\H{o}~\cite{Mezo} gave a different proof for (\ref{q-Double-2}).
In his proof, Mez\H{o} among other things analysed the logarithmic derivatives with respect to $z$ of
$\sin_q(2z)$ and
$\sin_{q^2} (z) \cos_{q^2}(z)$ and found that they coincide, implying that the ratio
\begin{equation}\label{ratio}
\frac{\sin_q(2z)}{\sin_{q^2} (z) \cos_{q^2}(z)} = C(q),
\end{equation}
for a constant $C(q)$ which he was able to determine
as in the formula (\ref{q-Double-2}).
Alternatively, taking into account the relations (\ref{sine-theta}) and (\ref{cosine-theta}),
formula~(\ref{q-Double-3}) can be
written as
\[
\frac{\theta_1\left(2z+\frac{\pi}{2} \bigm| \tau' \right)}{\theta_1\left(\frac{\pi}{2}\bigm| \tau' \right)} =
\left( \frac{\theta_1\left(z+\frac{\pi}{2} \bigm| \frac{\tau'}{2} \right)}{\theta_1\left(\frac{\pi}{2}\bigm| \frac{\tau'}{2}\right)}\right)^2
-
\left( \frac{\theta_1\left(z \bigm| \frac{\tau'}{2} \right)}{\theta_1\left(\frac{\pi}{2} \bigm| \frac{\tau'}{2}\right)} \right)^2,
\]
which after rearrangement becomes
\begin{equation}\label{help-0}
\theta_1 \left(2z+ \frac{\pi}{2} \bigm|  \tau'\right) \theta_1 ^2 \left( \frac{\pi}{2} \bigm|  \frac{\tau'}{2} \right)
=
\theta_1 \left(\frac{\pi}{2} \bigm|  \tau'\right) \theta_1 ^2 \left( z+\frac{\pi}{2} \bigm| \frac{\tau'}{2} \right)
- \theta_1 \left(\frac{\pi}{2} \bigm| \tau'\right) \theta_1 ^2 \left( z \bigm|  \frac{\tau'}{2} \right).
\end{equation}
Furthermore, again by virtue of (\ref{sine-theta}) and (\ref{cosine-theta}) note that formula
(\ref{q-Double-2}) means
\[
\frac{\theta_1(2z \mid \tau')}{\theta_1\left(\frac{\pi}{2} \bigm| \tau' \right)} =
C(q)\frac{\theta_1\left(z\bigm| \frac{\tau'}{2} \right)}{\theta_1\left(\frac{\pi}{2} \bigm| \frac{\tau'}{2} \right)} \cdot
\frac{\theta_1\left(z+\frac{\pi}{2} \bigm| \frac{\tau'}{2} \right)}
{\theta_1\left(\frac{\pi}{2} \bigm| \frac{\tau'}{2} \right)},
\]
or equivalently,
\[
\theta_1(2z \mid \tau')\theta_1^2\left(\frac{\pi}{2}\bigm| \frac{\tau'}{2} \right) =
C(q) \theta_1\left(\frac{\pi}{2}\bigm| \tau' \right)
\theta_1\left(z\bigm| \frac{\tau'}{2} \right) \theta_1\left(z+\frac{\pi}{2}\bigm| \frac{\tau'}{2} \right),
\]
which by observing that
\[
C(q) = \fr{ \te_1^2\lt \fr{\pi}{2}\bigm| \fr{\tau'}{2} \rt}{\te_1^2\lt \fr{\pi}{4}\bigm| \fr{\tau'}{2} \rt}
\]
means that
\begin{equation} \label{help}
\te_1 \lt z\bigm| \fr{\tau'}{2}\rt \te_1 \lt z+\fr{\pi}{2}\bigm| \fr{\tau'}{2}\rt
\te_1 \lt \fr{\pi}{2}\bigm| \tau'\rt =  \te_1 \lt 2z\bigm| \tau' \rt \te_1^2 \lt \fr{\pi}{4}\bigm| \fr{\tau'}{2}\rt.
\end{equation}
After recognising the equivalent forms (\ref{help-0}) and (\ref{help}) as three-term addition
formulas involving theta functions,
 this author in~\cite{Bachraoui} proved both
(\ref{q-Double-2}) and (\ref{q-Double-3}) by employing the theory of elliptic functions.
Moreover, by following the same steps as before, we can check that each one of Gosper's identities
(\ref{q-Double}), (\ref{q-Triple}), and (\ref{q-Triple-2}) is
a three-term addition formulas involving theta functions. See below Theorem~\ref{main-result-1},
Theorem~\ref{main-result-2}, and Theorem~\ref{main-result-3}.
The theory of elliptic functions proved to be a powerful tool to study this type of addition formulas.
For recent papers dealing with addition formulas by means of elliptic functions,
we refer to Liu~\cite{Liu-2005, Liu-2007}. See also Whittaker~and~Watson~\cite{Whittaker-Watson}
and Shen~\cite{Shen-1, Shen-2}
for more additive formulas involving theta functions and applications. For a more direct approach
to produce addition formulas involving theta functions through
infinite sums manipulations, we refer to the book by Lawden~\cite{Lawden}.
In this paper our goal is to
prove Gosper's formulas (\ref{q-Double}), (\ref{q-Triple}),
and (\ref{q-Triple-2}) using the theory of elliptic functions.
The paper is organized as follows. In Section~\ref{sec-main-results} we state three theorems and prove
three corollaries which are the equivalent forms of
(\ref{q-Double}), (\ref{q-Triple}), and (\ref{q-Triple-2}). In Section~\ref{sec-general-result} we state and
prove a general result which we shall need
to prove the three theorems in Section~\ref{sec-main-results}. Section~\ref{sec-proof-main-result-1},
Section~\ref{sec-proof-main-result-2}, and Section~\ref{sec-proof-main-result-3},
are devoted to proofs for Theorem~\ref{main-result-1}, Theorem~\ref{main-result-2}, and
Theorem~\ref{main-result-3} respectively.
%
\section{Main results}\label{sec-main-results}
\begin{theorem}\label{main-result-1}
For all complex number $z$ we have
 \[
 \begin{split}
& \theta_1^2(2z\mid\tau') \theta_1^4\left( \frac{\pi}{2}\bigm| \frac{\tau'}{2}\right)
\theta_1^2\left(\frac{\pi}{2}\bigm| \frac{\tau'}{4}\right)   \\
= &
\left( \frac{1}{2}\frac{\Pi_q}{\Pi_{q^4}}\right)^2 \theta_1^2\left(z \bigm| \frac{\tau'}{4}\right)
\theta_1^4\left(\frac{\pi}{2} \bigm| \frac{\tau'}{2}\right)
 \theta_1^2\left(\frac{\pi}{2}\mid\tau' \right)   \\
- &
 \left( \frac{1}{2}\frac{\Pi_q}{\Pi_{q^4}}\right)^2 \theta_1^4\left(z \bigm| \frac{\tau'}{2}\right)
 \theta_1^2\left(\frac{\pi}{2} \bigm| \frac{\tau'}{4}\right) \theta_1^2\left(\frac{\pi}{2}\mid\tau' \right).
 \end{split}
 \]
\end{theorem}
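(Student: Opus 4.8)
The plan is to recognise the asserted identity as the theta form of the \emph{squared} $q$-Double formula and to prove it by the elliptic function method. Reading the coefficients of the three functions of $z$,
\[
\phi_1(z)=\theta_1^2(2z\mid\tau'),\qquad \phi_2(z)=\theta_1^2\!\left(z\bigm|\tfrac{\tau'}{4}\right),\qquad \phi_3(z)=\theta_1^4\!\left(z\bigm|\tfrac{\tau'}{2}\right),
\]
off the statement, the claim takes the form $A\,\phi_1=B\,\phi_2-C\,\phi_3$ with $z$-independent constants $A,B,C$ (products of theta constants at the moduli $\tau',\tau'/2,\tau'/4$ times powers of $\tfrac12\tfrac{\Pi_q}{\Pi_{q^4}}$). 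First I would verify, using only the quasi-periodicity relations in (\ref{theta-1}), that all three functions obey the same transformation law under the lattice generated by $\pi$ and $\pi\tau'/2$: each is $\pi$-periodic and satisfies $\phi_j(z+\pi\tau'/2)=p^{-2}e^{-8iz}\phi_j(z)$. For $\phi_2$ and $\phi_3$ this is a two- resp.\ one-step iteration of the quasi-period of $\theta_1(\cdot\mid\tau'/4)$, resp.\ $\theta_1(\cdot\mid\tau'/2)$, and for $\phi_1$ it comes from doubling the argument. Consequently the ratios $R=\phi_2/\phi_1$ and $S=\phi_3/\phi_1$ are genuine elliptic functions with periods $\pi$ and $\pi\tau'/2$.

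Next I would locate the zeros in a fundamental parallelogram $P$. One checks that $\phi_1$ has double zeros exactly at $z=0$ and $z=\pi/2$, that $\phi_2$ has double zeros at $z=0$ and $z=\pi\tau'/4$, and that $\phi_3$ has a zero of order $4$ at $z=0$. Hence in $P$ both $R$ and $S$ are holomorphic away from $z=\pi/2$, where each has a double pole: the double zeros of the numerators at $z=0$ exactly absorb the double zero of $\phi_1$ there (so $R(0)$ is finite and $S$ even acquires a double zero at $0$). Because an elliptic function has vanishing residue sum over $P$ and each of $R,S$ has only this single pole, both $R$ and $S$ have \emph{pure} double poles at $\pi/2$. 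Writing $\phi_1(\pi/2+w)=\alpha w^2+O(w^4)$, their principal parts are $\phi_2(\pi/2)/(\alpha w^2)$ and $\phi_3(\pi/2)/(\alpha w^2)$. The key structural observation is that the prescribed coefficients satisfy $B\,\phi_2(\pi/2)=C\,\phi_3(\pi/2)$ identically—this is immediate from the explicit forms of $B$ and $C$—so the double poles of $B R-C S$ cancel. Thus $B R-C S$ is a holomorphic elliptic function, hence a constant.

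It then remains to identify this constant with $A$, which I would do by evaluating at $z=0$. Since $S$ vanishes at $0$ while $R(0)$ is finite and nonzero, the constant equals $B\,R(0)$. Using $\theta_1(2z\mid\tau')\sim 2z\,\theta_1'(0\mid\tau')$ and $\theta_1(z\mid\tau'/4)\sim z\,\theta_1'(0\mid\tau'/4)$ gives $R(0)=\bigl(\theta_1'(0\mid\tau'/4)\bigr)^2/\bigl(4(\theta_1'(0\mid\tau'))^2\bigr)$, so after cancelling the common factor $\theta_1^4(\pi/2\mid\tau'/2)$ the whole theorem reduces to the single theta-constant identity
\[
\theta_1^2\!\left(\tfrac{\pi}{2}\bigm|\tfrac{\tau'}{4}\right)=\left(\tfrac12\tfrac{\Pi_q}{\Pi_{q^4}}\right)^2\theta_1^2\!\left(\tfrac{\pi}{2}\bigm|\tau'\right)\frac{\bigl(\theta_1'(0\mid\tau'/4)\bigr)^2}{4\,\bigl(\theta_1'(0\mid\tau')\bigr)^2}.
\]

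This last reduction is where I expect the real work to lie, and it is the main obstacle: the left side and the theta derivatives carry the nomes $p^{1/4},p$ attached to $\tau'/4$ and $\tau'$, whereas $\Pi_q$ and $\Pi_{q^4}$ are built from the nome $q$. To close the gap I would either invoke Jacobi's imaginary transformation (\ref{ImTransf}) together with (\ref{key-derivative}) to rewrite the values $\theta_1'(0\mid\cdot)$ and $\theta_1(\pi/2\mid\cdot)$ in the $q$-variable and then match against the product (\ref{Piq}), or—probably more cleanly—express $\Pi_q/\Pi_{q^4}$ itself through theta constants via the already-recorded relation $\Pi_q/\Pi_{q^2}=\theta_1^2(\pi/2\mid\tau'/2)/\theta_1^2(\pi/4\mid\tau'/2)$ and its image under $q\mapsto q^2$, turning the target into a pure identity among theta constants at the moduli $\tau',\tau'/2,\tau'/4$ that can be settled by standard duplication and product formulas for $\theta_1$. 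Everything preceding this step is routine bookkeeping; the determination of the constant is the one place demanding genuine care.
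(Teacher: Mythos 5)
Your proposal is correct, but it takes a genuinely different route from the paper. The paper does not argue directly on the three functions in the statement: it first applies its general two-variable addition formula (Theorem~\ref{main-theorem}) to $h_1(z)=\theta_1^4\lt z\bigm|\fr{\tau'}{2}\rt$ and $h_2(z)=\theta_1^2\lt z\bigm|\fr{\tau'}{4}\rt$ with $k=l=2$, specializes $x=\fr{\pi}{2}$, and then --- crucially --- invokes the previously established identity (\ref{help}) (the theta form of ($q$-Double$_2$), proved in \cite{Bachraoui}) to convert the resulting product $\theta_1^2\lt y\bigm|\fr{\tau'}{2}\rt\theta_1^2\lt y+\fr{\pi}{2}\bigm|\fr{\tau'}{2}\rt$ into $\theta_1^2(2y\mid\tau')$; only then does $\theta_1(2y\mid\tau')$ enter the picture. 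You instead run a one-variable Liouville argument directly on $\phi_1,\phi_2,\phi_3$: the common multiplier system under $\pi$ and $\fr{\pi\tau'}{2}$, the zero counts, the evenness (or residue-sum) argument forcing pure double poles at $\fr{\pi}{2}$, and the cancellation $B\,\phi_2(\fr{\pi}{2})=C\,\phi_3(\fr{\pi}{2})$ are all correct, so $BR-CS$ is indeed constant and equal to $B\,R(0)$. This buys independence from both the general theorem and from ($q$-Double$_2$), making the argument self-contained, at the modest cost of the pole-cancellation check. Both routes terminate in a theta-constant identity verified by (\ref{ImTransf}) and (\ref{key-derivative}); yours,
\[
\te_1^2\lt \fr{\pi}{2}\bigm|\fr{\tau'}{4}\rt=\lt \fr12\fr{\Pi_q}{\Pi_{q^4}}\rt^2\te_1^2\lt \fr{\pi}{2}\bigm|\tau'\rt\fr{\lt\te_1'\lt 0\bigm|\fr{\tau'}{4}\rt\rt^2}{4\lt\te_1'(0\mid\tau')\rt^2},
\]
differs in form from the paper's (\ref{help-1-3}) (which involves $\te_1\lt\fr{\pi}{4}\bigm|\fr{\tau'}{2}\rt$) but is true and reduces by the same product manipulations to $(q^2;q^4)_\infty(q^4;q^4)_\infty=(q^2;q^2)_\infty$; you leave this computation as a plan rather than carrying it out, but the plan is sound and is exactly the kind of verification the paper itself performs, so there is no gap.
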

\begin{corollary}\label{main-cor-1}
For all complex number $z$ we have
\[
\sin_q(2z) = \frac{1}{2}\frac{\Pi_q}{\Pi_{q^4}} \sqrt{(\sin_{q^4}z)^2- (\sin_{q^2}z)^4 }.
\]
\end{corollary}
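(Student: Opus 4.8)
The plan is to obtain Corollary~\ref{main-cor-1} directly from Theorem~\ref{main-result-1}, since the theorem is precisely the squared form of the asserted formula once the theta constants are divided away. First I would set up the sine--theta dictionary at the three moduli that occur. Replacing $q$ by $q^2$ and by $q^4$ in (\ref{sine-theta}) amounts to replacing $\tau$ by $2\tau$ and $4\tau$, hence $\tau'$ by $\fr{\tau'}{2}$ and $\fr{\tau'}{4}$, so that
\[
\sin_{q^2}(z) = \fr{\te_1\lt z\bigm| \fr{\tau'}{2}\rt}{\te_1\lt \fr{\pi}{2}\bigm| \fr{\tau'}{2}\rt},
\qquad
\sin_{q^4}(z) = \fr{\te_1\lt z\bigm| \fr{\tau'}{4}\rt}{\te_1\lt \fr{\pi}{2}\bigm| \fr{\tau'}{4}\rt},
\]
while $\sin_q(2z) = \te_1(2z\mid\tau')/\te_1\lt \fr{\pi}{2}\bigm| \tau'\rt$ is (\ref{sine-theta}) itself.

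Next I would divide both sides of the identity in Theorem~\ref{main-result-1} by the common constant $\te_1^2\lt \fr{\pi}{2}\bigm| \tau'\rt \te_1^4\lt \fr{\pi}{2}\bigm| \fr{\tau'}{2}\rt \te_1^2\lt \fr{\pi}{2}\bigm| \fr{\tau'}{4}\rt$. On the left the surviving factor is $\te_1^2(2z\mid\tau')/\te_1^2\lt \fr{\pi}{2}\bigm| \tau'\rt = (\sin_q 2z)^2$; in the first term on the right it is $\te_1^2\lt z\bigm| \fr{\tau'}{4}\rt / \te_1^2\lt \fr{\pi}{2}\bigm| \fr{\tau'}{4}\rt = (\sin_{q^4}z)^2$, and in the second term it is $\te_1^4\lt z\bigm| \fr{\tau'}{2}\rt / \te_1^4\lt \fr{\pi}{2}\bigm| \fr{\tau'}{2}\rt = (\sin_{q^2}z)^4$. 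The coefficient $\lt \fr12\fr{\Pi_q}{\Pi_{q^4}}\rt^2$ passes through untouched, so the whole identity collapses to
\[
\lt \sin_q(2z)\rt^2 = \lt \fr12\fr{\Pi_q}{\Pi_{q^4}}\rt^2 \lt (\sin_{q^4}z)^2 - (\sin_{q^2}z)^4\rt,
\]
which is the exact square of the claimed formula; note that the explicit value of $\Pi_q$ in (\ref{Piq}) is never needed here, since the same $\Pi$-factor sits on both the theorem and the corollary.

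Finally I would take square roots, and this is the only step requiring care. The radicand $(\sin_{q^4}z)^2-(\sin_{q^2}z)^4$ is an even function of $z$ that vanishes to second order at $z=0$ (because $\sin_{q^4}z \sim z\,\te_1'(0\mid \fr{\tau'}{4})/\te_1\lt \fr{\pi}{2}\bigm| \fr{\tau'}{4}\rt$ while $(\sin_{q^2}z)^4 = O(z^4)$ is negligible against $(\sin_{q^4}z)^2 = O(z^2)$), so near the origin its square root is naturally an odd analytic function; this matches the parity of the odd function $\sin_q(2z)$ on the left and singles out the branch to be used. The squared identity above then forces the two sides to agree up to an overall sign, and comparing the leading linear coefficients at $z=0$ fixes that sign to be $+$, giving exactly the statement of Corollary~\ref{main-cor-1}. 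The modulus bookkeeping of the first step and the coefficient carry-through of the second are routine; the branch and sign normalisation in this last step is the genuinely delicate ingredient.
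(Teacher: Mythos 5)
Your proposal is correct and takes essentially the same route as the paper, whose entire proof is the remark that the corollary is Theorem~\ref{main-result-1} rewritten via (\ref{sine-theta}); you simply make explicit the modulus bookkeeping ($q\mapsto q^2,q^4$ corresponding to $\tau'\mapsto \tau'/2,\tau'/4$) and the division by the theta constants. Your extra care in fixing the branch and sign of the square root goes beyond what the paper records but is consistent with it and is a worthwhile addition.
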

\begin{proof}
By the relations (\ref{sine-theta}) and (\ref{cosine-theta}), we can readily
 see that (\ref{q-Double}), which is the statement of this corollary, is an equivalent form of Theorem~\ref{main-result-1}.
\end{proof}
\begin{theorem}\label{main-result-2}
For all complex number $z$ we have
\begin{align*}
 \theta_1(3z\mid\tau') \theta_1^3\left( \frac{\pi}{2}\bigm| \frac{\tau'}{3}\right)
\theta_1\left(\frac{\pi}{2}\bigm| \frac{\tau'}{9}\right)
 =
\frac{1}{3}\frac{\Pi_q}{\Pi_{q^9}} \theta_1\left(z \bigm| \frac{\tau'}{9}\right) \theta_1\left(\frac{\pi}{2} \bigm| \tau'\right)
 \theta_1^3\left(\frac{\pi}{2}\mid \frac{\tau'}{3} \right) \\
 -
 \left( \frac{1}{3}\frac{\Pi_q}{\Pi_{q^9}}+1\right) \theta_1^3\left(z \bigm| \frac{\tau'}{3}\right)
 \theta_1\left(\frac{\pi}{2} \bigm| \tau'\right) \theta_1\left(\frac{\pi}{2}\bigm| \frac{\tau'}{9} \right) .
\end{align*}
\end{theorem}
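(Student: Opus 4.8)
The plan is to strip the claimed identity of its constant (i.e.\ $z$-independent) factors and then recover those factors from a residue computation together with the transformation formula (\ref{key-derivative}). Dividing both sides by $\theta_1^3\left(\frac{\pi}{2}\mid\frac{\tau'}{3}\right)\theta_1\left(\frac{\pi}{2}\mid\frac{\tau'}{9}\right)$, the assertion is equivalent to the linear relation
\[
\theta_1(3z\mid\tau') = \alpha\,\theta_1\!\left(z\mid\tfrac{\tau'}{9}\right) + \beta\,\theta_1^3\!\left(z\mid\tfrac{\tau'}{3}\right),
\]
with $\alpha = \frac{1}{3}\frac{\Pi_q}{\Pi_{q^9}}\,\theta_1\!\left(\frac{\pi}{2}\mid\tau'\right)/\theta_1\!\left(\frac{\pi}{2}\mid\frac{\tau'}{9}\right)$ and $\beta = -\left(1+\frac{1}{3}\frac{\Pi_q}{\Pi_{q^9}}\right)\theta_1\!\left(\frac{\pi}{2}\mid\tau'\right)/\theta_1^3\!\left(\frac{\pi}{2}\mid\frac{\tau'}{3}\right)$. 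So it suffices to establish a relation of this shape and then to identify the two coefficients.

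First I would verify that the three functions $\theta_1(3z\mid\tau')$, $\theta_1(z\mid\frac{\tau'}{3})^3$, and $\theta_1(z\mid\frac{\tau'}{9})$ share exactly the same quasi-periodicity. Using (\ref{theta-1}) and iterating the quasi-period relation there (a shift by $\pi\tau'/3$ is three shifts by $\pi\tau'/9$ for the middle argument, and a cube for the $\frac{\tau'}{3}$ factor), each of the three is multiplied by $-1$ under $z\mapsto z+\pi$ and by $-p^{-1}e^{-6iz}$ under $z\mapsto z+\pi\tau'/3$, where $p=e^{\pi i\tau'}$. Hence, dividing by $\theta_1^3(z\mid\frac{\tau'}{3})$ yields two functions
\[
f(z)=\frac{\theta_1(3z\mid\tau')}{\theta_1^3\!\left(z\mid\frac{\tau'}{3}\right)},\qquad
g(z)=\frac{\theta_1\!\left(z\mid\frac{\tau'}{9}\right)}{\theta_1^3\!\left(z\mid\frac{\tau'}{3}\right)}
\]
that are elliptic for the lattice $\Lambda=\pi\Z+\frac{\pi\tau'}{3}\Z$.

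Next I would analyse their poles. In a fundamental parallelogram the denominator $\theta_1^3(z\mid\frac{\tau'}{3})$ has a single triple zero, at $z=0$, while each numerator has a simple zero there; thus $f$ and $g$ have one double pole at $z=0$ and no other singularities. An elliptic function whose only pole is a double pole at a single lattice point lies in the two-dimensional space $\operatorname{span}\{1,\wp\}$ (by Liouville's theorem, the residue at the lone pole being forced to vanish). Therefore $1,f,g$ are linearly dependent, and since $f$ and $g$ each carry a genuine double pole, a suitable multiple of $g$ cancels the pole of $f$ and leaves a constant, giving $f=\alpha g+\beta$. The coefficient $\alpha$ is the ratio of the leading Laurent coefficients at $z=0$: from $\theta_1(3z\mid\tau')\sim 3z\,\theta_1'(0\mid\tau')$, $\theta_1(z\mid\frac{\tau'}{9})\sim z\,\theta_1'(0\mid\frac{\tau'}{9})$ and $\theta_1^3(z\mid\frac{\tau'}{3})\sim z^3\,\theta_1'(0\mid\frac{\tau'}{3})^3$ one reads off $\alpha=3\,\theta_1'(0\mid\tau')/\theta_1'(0\mid\frac{\tau'}{9})$. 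Evaluating the relation at $z=\frac{\pi}{2}$, where $\theta_1\!\left(\frac{3\pi}{2}\mid\tau'\right)=-\theta_1\!\left(\frac{\pi}{2}\mid\tau'\right)$ by (\ref{theta-1}), then solves for $\beta$ in terms of $\alpha$ and, pleasantly, produces precisely the factor $1+\frac{1}{3}\frac{\Pi_q}{\Pi_{q^9}}$ recorded in the statement.

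The main obstacle is the closed-form identification of $\alpha$, namely showing that $3\,\theta_1'(0\mid\tau')/\theta_1'(0\mid\frac{\tau'}{9})$ equals $\frac{1}{3}\frac{\Pi_q}{\Pi_{q^9}}\,\theta_1\!\left(\frac{\pi}{2}\mid\tau'\right)/\theta_1\!\left(\frac{\pi}{2}\mid\frac{\tau'}{9}\right)$. Here I would apply the transformation (\ref{key-derivative}) with base parameter $\tau$ and again with base parameter $9\tau$, so that $\frac{\tau'}{9}=-\frac{1}{9\tau}$ plays the role of $\tau'$ and the nome becomes $q^9$, and I would likewise push $\theta_1\!\left(\frac{\pi}{2}\mid\tau'\right)$ and $\theta_1\!\left(\frac{\pi}{2}\mid\frac{\tau'}{9}\right)$ through (\ref{ImTransf}) to convert them into $q$- and $q^9$-products. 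The delicate part is the bookkeeping: the modular factors $(-i\tau)^{3/2}$, the powers of $3$, and the powers of $q$ must all cancel, while the infinite products collapse (via $(-Q^2;Q^2)_\infty=(Q^4;Q^4)_\infty/(Q^2;Q^2)_\infty$ and $(q;q^2)_\infty(q^3;q^2)_\infty=(q;q^2)_\infty^2/(1-q)$) to reproduce exactly the definition (\ref{Piq}) of $\Pi_q$ and $\Pi_{q^9}$. Once this reduction is carried out the coefficients match, which completes the proof.
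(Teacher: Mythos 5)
Your argument is correct, and it reaches the theorem by a genuinely different (and arguably cleaner) structural route than the paper. The paper does not argue directly with $1$, $\wp$, and double poles; instead it invokes its general Theorem~\ref{main-theorem} (the three-term addition formula, with $k=3$, $l=1$) twice — once with $h_1=\theta_1^3(z\mid\tau'/3)$, $h_2=\theta_1(3z\mid\tau')$ and once with $h_1=\theta_1^3(z\mid\tau'/3)$, $h_2=\theta_1(z\mid\tau'/9)$ — then specializes $x=\pi/2$ in both, so that each of $\theta_1(3z\mid\tau')$ and $\theta_1(z\mid\tau'/9)$ is expressed in terms of $\theta_1^3(z\mid\tau'/3)$ plus a common product $\theta_1(z\mid\tau'/3)\,\theta_1^2(z+\tfrac{\pi}{2}\mid\tau'/3)\,\theta_1(\tfrac{\pi}{2}\mid\tau'/3)$, and the theorem follows by equating the two right-hand sides. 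Your Liouville-style argument — checking that $\theta_1(3z\mid\tau')$, $\theta_1^3(z\mid\tau'/3)$, $\theta_1(z\mid\tau'/9)$ share the multipliers $-1$ and $-p^{-1}e^{-6iz}$ for the lattice $\pi\Z+\tfrac{\pi\tau'}{3}\Z$, so that $f$ and $g$ are elliptic with a single double pole and hence lie in $\operatorname{span}\{1,\wp\}$ — bypasses the general theorem and the auxiliary product entirely, and your extraction of $\alpha$ from leading Laurent coefficients and of $\beta$ from $z=\tfrac{\pi}{2}$ is sound. Both approaches funnel into the identical closed-form evaluation: your requirement that
\[
\frac{3\,\theta_1'(0\mid\tau')}{\theta_1'\!\left(0\mid\frac{\tau'}{9}\right)}
=\frac{1}{3}\,\frac{\Pi_q}{\Pi_{q^9}}\,
\frac{\theta_1\!\left(\frac{\pi}{2}\mid\tau'\right)}{\theta_1\!\left(\frac{\pi}{2}\mid\frac{\tau'}{9}\right)}
\]
is exactly the paper's identity~(\ref{help-2-3}), which it verifies via (\ref{ImTransf}) and (\ref{key-derivative}) precisely as you outline; so the one step you leave as ``bookkeeping'' is known to close. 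What the paper's route buys is uniformity — the same Theorem~\ref{main-theorem} drives Theorems~\ref{main-result-1}, \ref{main-result-2}, and \ref{main-result-3}, and the intermediate identity (\ref{help-2-0}) is reused verbatim in the proof of Theorem~\ref{main-result-3} — while your route buys economy and makes transparent why exactly three terms appear (a two-dimensional space of elliptic functions with one double pole).
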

\begin{corollary}\label{main-cor-2}
For all complex number $z$ we have
\[
\sin_q(3z) = \frac{1}{3}\frac{\Pi_q}{\Pi_{q^9}} \sin_{q^9}z - \left(1+\frac{1}{3}\frac{\Pi_q}{\Pi_{q^9}}\right) (\sin_{q^3}z)^3.
\]
\end{corollary}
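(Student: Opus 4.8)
The plan is to derive the corollary from Theorem~\ref{main-result-2} in exactly the spirit in which Corollary~\ref{main-cor-1} was obtained from Theorem~\ref{main-result-1}: divide the theta-function identity by a suitable product of theta constants and then recognise each resulting ratio as a value of $\sin_q$ via the defining relation (\ref{sine-theta}). The one observation that makes this translation work is that replacing $q$ by $q^k$ amounts to replacing $\tau$ by $k\tau$, and hence $\tau'=-1/\tau$ by $-1/(k\tau)=\tau'/k$. Consequently (\ref{sine-theta}) upgrades to
\[
\sin_{q^k}(z) = \fr{\te_1\lt z\bigm| \fr{\tau'}{k}\rt}{\te_1\lt \fr{\pi}{2}\bigm| \fr{\tau'}{k}\rt},
\]
so that the moduli $\tau'$, $\tau'/3$, and $\tau'/9$ occurring in Theorem~\ref{main-result-2} correspond precisely to $\sin_q$, $\sin_{q^3}$, and $\sin_{q^9}$.

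Concretely, I would divide both sides of the identity in Theorem~\ref{main-result-2} by the product
\[
\te_1\lt \fr{\pi}{2}\bigm| \tau'\rt \, \te_1^3\lt \fr{\pi}{2}\bigm| \fr{\tau'}{3}\rt \, \te_1\lt \fr{\pi}{2}\bigm| \fr{\tau'}{9}\rt .
\]
On the left this collapses to $\te_1(3z\mid\tau')/\te_1(\pi/2\mid\tau')=\sin_q(3z)$, since the two remaining theta constants cancel. In the first term on the right, the factors $\te_1(\pi/2\mid\tau')$ and $\te_1^3(\pi/2\mid\tau'/3)$ cancel, leaving $\tfrac13\tfrac{\Pi_q}{\Pi_{q^9}}\,\te_1(z\mid\tau'/9)/\te_1(\pi/2\mid\tau'/9)=\tfrac13\tfrac{\Pi_q}{\Pi_{q^9}}\sin_{q^9}z$. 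In the second term, the factors $\te_1(\pi/2\mid\tau')$ and $\te_1(\pi/2\mid\tau'/9)$ cancel while the cube $\te_1^3(z\mid\tau'/3)/\te_1^3(\pi/2\mid\tau'/3)$ becomes $(\sin_{q^3}z)^3$, carrying the coefficient $-(1+\tfrac13\tfrac{\Pi_q}{\Pi_{q^9}})$. Assembling the three pieces reproduces the claimed formula.

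Since each step is a cancellation followed by an application of (\ref{sine-theta}), there is essentially no analytic obstacle here; the proof is a purely formal rewriting once Theorem~\ref{main-result-2} is in hand. The only point demanding care is the bookkeeping: one must track which theta constant cancels in which term so that every ratio is left in the exact form $\te_1(\,\cdot\mid \tau'/k)/\te_1(\pi/2\mid\tau'/k)$ demanded by (\ref{sine-theta}). All the genuine difficulty of the identity has thus been pushed back into Theorem~\ref{main-result-2} itself, whose proof by the theory of elliptic functions is the substantive part of the argument.
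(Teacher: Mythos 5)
Your proof is correct and is exactly the argument the paper intends (the paper's own proof of Corollary~\ref{main-cor-2} simply asserts that (\ref{q-Triple}) is equivalent to Theorem~\ref{main-result-2} via (\ref{sine-theta})); dividing by $\te_1\lt \fr{\pi}{2}\bigm|\tau'\rt\te_1^3\lt \fr{\pi}{2}\bigm|\fr{\tau'}{3}\rt\te_1\lt \fr{\pi}{2}\bigm|\fr{\tau'}{9}\rt$ and using $\sin_{q^k}(z)=\te_1\lt z\bigm|\fr{\tau'}{k}\rt/\te_1\lt \fr{\pi}{2}\bigm|\fr{\tau'}{k}\rt$ is precisely the "easy check" left to the reader. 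Your bookkeeping of which theta constants cancel in each term is accurate, so nothing is missing.
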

\begin{proof}
By the relations (\ref{sine-theta}) and (\ref{cosine-theta}), it is easy to check that the identity (\ref{q-Triple}) is equivalent to Theorem~\ref{main-result-2}.
\end{proof}
\begin{theorem}\label{main-result-3}
For all complex number $z$ we have
\begin{align*}
 \theta_1(3z\mid\tau') \theta_1^3\left( \frac{\pi}{2}\bigm| \frac{\tau'}{3}\right)
 +  \theta_1^3\left(z \bigm| \frac{\tau'}{3}\right)  \theta_1\left(\frac{\pi}{2} \bigm| \tau'\right) \\
 =
\frac{\Pi_q}{\Pi_{q^3}}
\theta_1\left(z \bigm| \frac{\tau'}{3}\right) \te_1^2\left(z+\frac{\pi}{2}\bigm| \frac{\tau'}{3}\right)
 \te_1\left( \fr{\pi}{2}\bigm| \fr{\tau'}{3}\right).
 \end{align*}
\end{theorem}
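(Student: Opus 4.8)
The plan is to view each $z$-dependent factor in the identity as an element of one and the same finite-dimensional space of theta functions, and then to force the identity through its divisor. Set
\[
g_1(z)=\theta_1(3z\mid\tau'),\quad g_2(z)=\theta_1^3\Big(z\bigm|\tfrac{\tau'}{3}\Big),\quad g_3(z)=\theta_1\Big(z\bigm|\tfrac{\tau'}{3}\Big)\theta_1^2\Big(z+\tfrac{\pi}{2}\bigm|\tfrac{\tau'}{3}\Big).
\]
Using the basic relations (\ref{theta-1}) (for $g_1$, note that $3(z+\pi)=3z+3\pi$ and $3(z+\tfrac{\pi\tau'}{3})=3z+\pi\tau'$), a direct computation shows that each of $g_1,g_2,g_3$ obeys the same pair of functional equations
\[
f(z+\pi)=-f(z),\qquad f\Big(z+\tfrac{\pi\tau'}{3}\Big)=-p^{-1}e^{-6iz}f(z),
\]
with $p=e^{\pi i\tau'}$. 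Hence all three lie in the space $V$ of entire functions satisfying these two relations, which is the classical space of theta functions of order $3$ for the lattice $\pi\Z+\tfrac{\pi\tau'}{3}\Z$. In particular $\dim V=3$, and every nonzero $f\in V$ has exactly three zeros in a fundamental parallelogram whose sum is $\equiv 0$ modulo the lattice; one reads this off the model element $g_1$, whose zeros are $0,\tfrac{\pi}{3},\tfrac{2\pi}{3}$.

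Next I would form the left-hand side of the theorem,
\[
F(z)=\theta_1^3\Big(\tfrac{\pi}{2}\bigm|\tfrac{\tau'}{3}\Big)\,g_1(z)+\theta_1\Big(\tfrac{\pi}{2}\bigm|\tau'\Big)\,g_2(z)\in V,
\]
and study the ratio $R(z)=F(z)/g_3(z)$. Since numerator and denominator carry the same two multipliers, $R$ is elliptic with periods $\pi$ and $\tfrac{\pi\tau'}{3}$. The coefficients in $F$ are chosen precisely so that $F$ vanishes where $g_3$ does: $F(0)=0$ because $g_1$ and $g_2$ vanish at $0$, and $F(\tfrac{\pi}{2})=0$ because $g_1(\tfrac{\pi}{2})=\theta_1(\tfrac{3\pi}{2}\mid\tau')=-\theta_1(\tfrac{\pi}{2}\mid\tau')$ exactly cancels $g_2(\tfrac{\pi}{2})=\theta_1^3(\tfrac{\pi}{2}\mid\tfrac{\tau'}{3})$. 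Knowing two zeros $0$ and $\tfrac{\pi}{2}$ of $F\in V$, the zero-sum property forces the third zero to be $-\tfrac{\pi}{2}\equiv\tfrac{\pi}{2}$, so $F$ has the divisor $\{0,\tfrac{\pi}{2},\tfrac{\pi}{2}\}$ --- exactly that of $g_3$. Consequently $R$ has no poles and, by Liouville's theorem, is a constant $c$; that is, $F=c\,g_3$, which is the theorem up to the value of $c$.

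It then remains to evaluate $c$, and this is where I expect the real work to lie. Comparing the coefficient of $z$ in the Taylor expansions of $F$ and $g_3$ at $z=0$ (the term $g_2$ enters only at order $z^3$) gives
\[
c=\frac{3\,\theta_1^3(\tfrac{\pi}{2}\mid\tfrac{\tau'}{3})\,\theta_1'(0\mid\tau')}{\theta_1^2(\tfrac{\pi}{2}\mid\tfrac{\tau'}{3})\,\theta_1'(0\mid\tfrac{\tau'}{3})}.
\]
To recognise this as the coefficient of $g_3$ on the right-hand side of the theorem, I would pass from modulus $\tau'$ to modulus $\tau$: the derivative formula (\ref{key-derivative}), used also with $\tau$ replaced by $3\tau$ (so that $\tfrac{\tau'}{3}=-\tfrac{1}{3\tau}$), converts the quotient of the $\theta_1'(0\mid\cdot)$'s into eta-type infinite products in $q$, while Jacobi's imaginary transformation (\ref{ImTransf}) applied to the theta constant $\theta_1(\tfrac{\pi}{2}\mid\cdot)=\theta_2(0\mid\cdot)$ supplies the remaining factor. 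The modular prefactors $(-i\tau)^{3/2}$ and the powers of $\sqrt{3}$ produced by these two transformations cancel against each other, and the surviving products reassemble exactly into $\Pi_q/\Pi_{q^3}$ from (\ref{Piq}) (times the theta constant carried by the right-hand side). The conceptual content is finished once $R$ is shown to be constant; the delicate part is the careful bookkeeping of these transformation factors.
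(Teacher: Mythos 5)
Your structural argument is sound and runs essentially parallel to the paper's: the paper obtains the same three-term relation by specializing its general addition formula (Theorem~\ref{main-theorem} with $h_1(z)=\te_1^3\lt z\bigm|\fr{\tau'}{3}\rt$, $h_2(z)=\te_1(3z\mid\tau')$, $k=3$, $l=1$) at $x=\fr{\pi}{2}$, which is a two-variable packaging of your divisor-plus-Liouville argument; your quasi-periodicity checks, the identification of the zeros $0$ and $\fr{\pi}{2}$ of $F$, the zero-sum step forcing the third zero to $\fr{\pi}{2}$, and your Taylor expansion at $z=0$ are all correct and reproduce exactly the paper's constant
\[
c=3\,\fr{\te_1'(0\mid\tau')}{\te_1'\lt 0\bigm|\fr{\tau'}{3}\rt}\,\te_1\lt\fr{\pi}{2}\bigm|\fr{\tau'}{3}\rt.
\]

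The gap sits precisely in the step you postponed. Matching this $c$ against the right-hand side \emph{as printed} would require $3\,\te_1'(0\mid\tau')\big/\te_1'\lt 0\bigm|\fr{\tau'}{3}\rt=\Pi_q/\Pi_{q^3}$, and that is false: by (\ref{key-derivative}) the left side equals $\fr{1}{\sqrt{3}}\,q^{-1/2}(q^2;q^2)_{\infty}^3/(q^6;q^6)_{\infty}^3$, which already as $q\to0$ differs from $\Pi_q/\Pi_{q^3}\sim q^{-1/2}$ by the factor $1/\sqrt{3}$. The identity that is actually true is the paper's (\ref{help-3-1}),
\[
3\,\fr{\te_1'(0\mid\tau')}{\te_1'\lt 0\bigm|\fr{\tau'}{3}\rt}\,
\fr{\te_1\lt\fr{\pi}{2}\bigm|\fr{\tau'}{3}\rt}{\te_1\lt\fr{\pi}{2}\bigm|\tau'\rt}=\fr{\Pi_q}{\Pi_{q^3}},
\]
so that $c=\fr{\Pi_q}{\Pi_{q^3}}\,\te_1\lt\fr{\pi}{2}\bigm|\tau'\rt$: the final theta constant on the right-hand side of the displayed statement should be $\te_1\lt\fr{\pi}{2}\bigm|\tau'\rt$ rather than $\te_1\lt\fr{\pi}{2}\bigm|\fr{\tau'}{3}\rt$ (this is also what Corollary~\ref{main-cor-3} forces after clearing denominators, and what the paper's own computation establishes). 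So carry out the product bookkeeping you describe, but aim it at (\ref{help-3-1}); as sketched, your final step would either fail to close or silently prove a statement different from the one printed.
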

\begin{corollary}\label{main-cor-3}
For all complex number $z$ we have
\[
\sin_q(3z) = \frac{\Pi_q}{\Pi_{q^3}} (\cos_{q^3} z)^2 \sin_{q^3}z -  (\sin_{q^3}z)^3.
\]
\end{corollary}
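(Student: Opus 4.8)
The plan is to deduce Corollary~\ref{main-cor-3} from Theorem~\ref{main-result-3} by translating each $q$-trigonometric function back into a quotient of first Jacobi theta functions, exactly as in the proofs of Corollary~\ref{main-cor-1} and Corollary~\ref{main-cor-2}. The assertion is Gosper's identity (\ref{q-Triple-2}), and the only inputs needed are the dictionaries (\ref{sine-theta}) and (\ref{cosine-theta}) together with the (assumed) Theorem~\ref{main-result-3}.

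First I would record the theta representations of the functions occurring in (\ref{q-Triple-2}). Applying (\ref{sine-theta}) with argument $3z$ gives $\sin_q(3z)=\theta_1(3z\mid\tau')/\theta_1(\tfrac{\pi}{2}\mid\tau')$. For the functions with base $q^3$ the decisive observation is that replacing $q=e^{\pi i\tau}$ by $q^3$ replaces $\tau$ by $3\tau$, and hence $\tau'=-1/\tau$ by $-1/(3\tau)=\tau'/3$; therefore (\ref{sine-theta}) and (\ref{cosine-theta}) yield
\[
\sin_{q^3}z=\frac{\theta_1\left(z\bigm|\frac{\tau'}{3}\right)}{\theta_1\left(\frac{\pi}{2}\bigm|\frac{\tau'}{3}\right)},
\qquad
\cos_{q^3}z=\frac{\theta_1\left(z+\frac{\pi}{2}\bigm|\frac{\tau'}{3}\right)}{\theta_1\left(\frac{\pi}{2}\bigm|\frac{\tau'}{3}\right)}.
\]
I would then substitute these three expressions into (\ref{q-Triple-2}) and clear denominators by multiplying through by $\theta_1(\tfrac{\pi}{2}\mid\tau')\,\theta_1^3(\tfrac{\pi}{2}\mid\tfrac{\tau'}{3})$. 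Both $(\sin_{q^3}z)^3$ and $(\cos_{q^3}z)^2\sin_{q^3}z$ contribute the denominator $\theta_1^3(\tfrac{\pi}{2}\mid\tfrac{\tau'}{3})$, so after clearing, the normalizing theta-values recombine into precisely the constants displayed in Theorem~\ref{main-result-3}, the overall multiplier $\Pi_q/\Pi_{q^3}$ being carried through untouched. The equation obtained is verbatim the statement of Theorem~\ref{main-result-3}, which completes the argument.

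The step I would watch most carefully — and the only place one can realistically go wrong — is the modulus bookkeeping for the normalizing constants. One must use that $q\mapsto q^3$ corresponds to $\tau'\mapsto\tau'/3$ (and not to $3\tau'$), and then keep precise track of which factor $\theta_1(\tfrac{\pi}{2}\mid\cdot)$ survives on each of the three terms after clearing denominators: the term built from $\theta_1(3z\mid\tau')$ retains $\theta_1^3(\tfrac{\pi}{2}\mid\tfrac{\tau'}{3})$, whereas the cubic term and the product term each retain $\theta_1(\tfrac{\pi}{2}\mid\tau')$. Since this reduction is purely algebraic and carries no analytic content, I expect no genuine obstacle here; all the substantive work, namely the three-term theta identity itself, has been isolated in Theorem~\ref{main-result-3}.
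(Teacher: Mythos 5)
Your proposal is correct and follows exactly the route of the paper's own (one-line) proof: translate $\sin_q(3z)$, $\sin_{q^3}z$, $\cos_{q^3}z$ into theta quotients via (\ref{sine-theta}) and (\ref{cosine-theta}), using that $q\mapsto q^3$ corresponds to $\tau'\mapsto\tau'/3$, and clear denominators to recover Theorem~\ref{main-result-3}. Your careful bookkeeping in fact shows that the final factor on the right-hand side of the printed Theorem~\ref{main-result-3} should be $\theta_1\bigl(\tfrac{\pi}{2}\bigm|\tau'\bigr)$ rather than $\theta_1\bigl(\tfrac{\pi}{2}\bigm|\tfrac{\tau'}{3}\bigr)$ --- a typographical slip in the theorem's statement, since the paper's own derivation in Section~\ref{sec-proof-main-result-3} (via (\ref{help-3-1})) produces exactly the form you obtain.
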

\begin{proof}
By the relations (\ref{sine-theta}) and (\ref{cosine-theta}), it is easy to check that the identity (\ref{q-Triple-2}) is equivalent to Theorem~\ref{main-result-3}.
\end{proof}
\section{A general result}\label{sec-general-result}
\noindent
We note that Theorem~\ref{main-result-1}, Theorem~\ref{main-result-2}, and
Theorem~\ref{main-result-3} below are consequences of the following result which is an extension of a theorem of Liu~\cite[Theorem 1]{Liu-2007}.
\begin{theorem}\label{main-theorem}
Let $k$ be a positive integer, let $l$ be a nonnegative integer, and let $h_1(z)$ and $h_2(z)$ be entire functions satisfying the following
two conditions:
\[
h(z+\pi) = (-1)^l h(z)\ \text{and\ }
h(z+\fr{\pi\tau}{k}) = (-1)^{l} q^{\fr{-(2+l)}{k}} e^{-2(2+l)iz} h(z).
\]
If $h_1(z)$ and $h_2(z)$ have a zero of order al least $l$ at $z=0$, then there is a constant $C$ such
 that
\begin{align} \label{master}
 \left( h_1(x)+ (-1)^l h_1(-x) \right)  \left( h_2(y)+ (-1)^l h_2(-y) \right) \nonumber \\
 -
 \left( h_2(x)+ (-1)^l h_2(-x) \right)  \left( h_1(y)+ (-1)^l h_1(-y) \right) \nonumber
 \\
 = C \theta_1^l \left(x \bigm| \frac{\tau}{k}\right)\theta_1^l \left(y \bigm| \frac{\tau}{k}\right)
 \theta_1 \left(x+y \bigm| \frac{\tau}{k}\right) \theta_1 \left(x -y\bigm| \frac{\tau}{k}\right)
\end{align}
\end{theorem}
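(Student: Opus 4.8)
The plan is to fix $y$, regard the left-hand side of (\ref{master}) as a function of $x$, show that after dividing by a suitable product of theta functions it becomes a \emph{pole-free elliptic} function of $x$, and then invoke the fact that a holomorphic elliptic function is constant. First I would introduce the symmetrizations
\[
H_j(z) = h_j(z) + (-1)^l h_j(-z) \qquad (j=1,2),
\]
so that the left-hand side of (\ref{master}) is exactly $\Phi(x,y) := H_1(x)H_2(y) - H_2(x)H_1(y)$. Straight from the definition one reads off the parity $H_j(-z) = (-1)^l H_j(z)$. Feeding the two hypotheses on $h_j$ into this definition, and applying them once more at the shifted argument $-z$, I would show that each $H_j$ satisfies the \emph{same} pair of functional equations
\[
H_j(z+\pi) = (-1)^l H_j(z), \qquad H_j\!\left(z+\tfrac{\pi\tau}{k}\right) = (-1)^l q^{-(2+l)/k} e^{-2(2+l)iz} H_j(z).
\]
Consequently, with $y$ held fixed, $\Phi(\cdot,y)$ is entire and transforms under $x\mapsto x+\pi$ and $x\mapsto x+\tfrac{\pi\tau}{k}$ by precisely these two multipliers; note also that $\Phi$ is antisymmetric under $x\leftrightarrow y$.

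Next I would record the matching transformation laws of the theta factor. Replacing $\tau$ by $\tau/k$ in the relations (\ref{theta-1}) gives $\theta_1(z+\pi\mid\tfrac{\tau}{k}) = -\theta_1(z\mid\tfrac{\tau}{k})$ and $\theta_1(z+\tfrac{\pi\tau}{k}\mid\tfrac{\tau}{k}) = -q^{-1/k} e^{-2iz}\theta_1(z\mid\tfrac{\tau}{k})$. A short computation then shows that
\[
\Psi(x) := \theta_1^l\!\left(x\mid\tfrac{\tau}{k}\right)\theta_1\!\left(x+y\mid\tfrac{\tau}{k}\right)\theta_1\!\left(x-y\mid\tfrac{\tau}{k}\right)
\]
acquires exactly the factor $(-1)^l$ under $x\mapsto x+\pi$ and exactly $(-1)^l q^{-(2+l)/k}e^{-2(2+l)ix}$ under $x\mapsto x+\tfrac{\pi\tau}{k}$ — the very multipliers carried by $\Phi(\cdot,y)$. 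Hence the quotient $\Phi(x,y)/\Psi(x)$ is invariant under the lattice $\pi\mathbb{Z}+\tfrac{\pi\tau}{k}\mathbb{Z}$, i.e. an elliptic function of $x$.

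It then remains to check that this quotient has no poles. For generic $y$ the zeros of $\Psi$ in a fundamental cell are a zero of order $l$ at $x=0$ together with simple zeros at $x=\pm y$, and I must verify that $\Phi(\cdot,y)$ vanishes at least to these orders. At $x=y$ clearly $\Phi(y,y)=0$; at $x=-y$ the parity $H_j(-y)=(-1)^l H_j(y)$ yields $\Phi(-y,y)=0$; and since $h_1,h_2$, hence $H_1,H_2$, vanish to order at least $l$ at the origin, so does $\Phi(\cdot,y)$, matching the order-$l$ zero of $\Psi$. Thus $\Phi(x,y)/\Psi(x)$ is entire and elliptic, hence a constant $C(y)$, giving $\Phi(x,y)=C(y)\,\theta_1^l(x\mid\tfrac{\tau}{k})\theta_1(x+y\mid\tfrac{\tau}{k})\theta_1(x-y\mid\tfrac{\tau}{k})$; multiplying and dividing by $\theta_1^l(y\mid\tfrac{\tau}{k})$ recasts this in the symmetric shape of (\ref{master}). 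Finally, since $\Phi$ is antisymmetric in $x\leftrightarrow y$ and, by the oddness of $\theta_1$, so is the full product $\theta_1^l(x)\theta_1^l(y)\theta_1(x+y)\theta_1(x-y)$, interchanging $x$ and $y$ forces $C(x)=C(y)$; hence $C$ is an absolute constant.

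The main obstacle I anticipate is the exact bookkeeping of the second quasi-period: the exponent $-(2+l)/k$ on $q$ and the factor $e^{-2(2+l)iz}$ in the hypotheses are tailored precisely so that the $l$ copies of $\theta_1(x)$ and the two shifted factors $\theta_1(x\pm y)$ together reproduce the multiplier carried by $\Phi$, and aligning these powers of $q$ and of $e$ is where a slip is easiest. The secondary delicate point is confirming the order-$l$ vanishing of $\Phi$ at the origin, which is exactly what the hypothesis on the order of the zero of $h_1,h_2$ is there to supply.
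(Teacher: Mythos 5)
Your proposal is correct and follows essentially the same route as the paper: fix $y$, check that the symmetrized left-hand side and the theta product carry the same multipliers under $x\mapsto x+\pi$ and $x\mapsto x+\frac{\pi\tau}{k}$, match the zeros at $x=0$ (order $l$) and $x=\pm y$ so the quotient is a pole-free elliptic function and hence a constant $C(y)$, and then use the $x\leftrightarrow y$ interchange (your antisymmetry phrasing is just the paper's "swap the roles of $x$ and $y$" step) to conclude $C(y)=C\,\theta_1^l\left(y\bigm|\frac{\tau}{k}\right)$. The only cosmetic difference is that the paper locates the second simple zero at $x=\pi-y$ inside its chosen fundamental parallelogram rather than at $x=-y$, and finishes with an explicit appeal to analytic continuation.
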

\begin{proof}
Following the notation of Liu~\cite{Liu-2007}, let
\begin{align*}
H(x) = \left( h_1(x)+ (-1)^l h_1(-x) \right)  \left( h_2(y)+ (-1)^l h_2(-y) \right)  \\
 -
 \left( h_2(x)+ (-1)^l h_2(-x) \right)  \left( h_1(y)+ (-1)^l h_1(-y) \right)
 \end{align*}
and let
\[
G(x) = \theta_1^l \left(x \bigm| \frac{\tau}{k}\right)
 \theta_1 \left(x+y \bigm| \frac{\tau}{k}\right) \theta_1 \left(x -y\bigm| \frac{\tau}{k}\right).
 \]
 Then it is easily checked, with the help on the assumptions on the functions $h_1$ and $h_2$,
  that
 \[
 H(x+\pi) = (-1)^l H(x) \ \text{and\ } H \lt x+\fr{\pi\tau}{k} \rt = (-1)^{l} q^{\fr{-(2+l)}{k}} e^{-2(2+l) ix} H(x).
 \]
 Moreover, by the basic properties in (\ref{theta-1}) and the formula (\ref{Transf}) we have
 \[
 G(x+\pi) = (-1)^l G(x) \ \text{and\ }  G \lt x+\fr{\pi\tau}{k} \rt = (-1)^{l} q^{\fr{-(2+l)}{k}} e^{-2(2+l) ix} G(x).
 \]
 Thus the ratio $\fr{H(x)}{G(x)}$ is an elliptic function with periods $\pi$ and $\fr{\pi\tau}{k}$. Suppose
 for the moment that $0<y<\pi$ and consider the fundamental period parallelogram with corners
 $0, \pi, \fr{\pi\tau}{k}, \pi+\fr{\pi\tau}{k}$. Clearly, in this parallelogram $G(x)$ has a zero of order $l$ at $x=0$
 and simple zeros at $x=y$ and $x=\pi-y$. Next, by the assumptions on $h_1(x)$ and
 $h_2(x)$ we can check that $H(x)$ has a zero of order at least $l$ at $x=0$ and zeros at $x=y$
 and $x=\pi-y$. So, $\fr{H(x)}{G(x)}$ has no poles in the period parallelogram and therefore
 $\fr{H(x)}{G(x)} = C(y)$ where $C(y)$ is a constant depending possibly only on $y$. That is,
 \begin{align*}
 \left( h_1(x)+ (-1)^l h_1(-x) \right)  \left( h_2(y)+ (-1)^l h_2(-y) \right) \\
 -
 \left( h_2(x)+ (-1)^l h_2(-x) \right)  \left( h_1(y)+ (-1)^l h_1(-y) \right) \\
 =
 C(y) \theta_1^l \left(x \bigm| \frac{\tau}{k}\right)
 \theta_1 \left(x+y \bigm| \frac{\tau}{k}\right) \theta_1 \left(x -y\bigm| \frac{\tau}{k}\right).
 \end{align*}
 Interchanging the roles of $x$ and $y$, we get
 \begin{align*}
 \left( h_1(x)+ (-1)^l h_1(-x) \right)  \left( h_2(y)+ (-1)^l h_2(-y) \right) \\
 -
 \left( h_2(x)+ (-1)^l h_2(-x) \right)  \left( h_1(y)+ (-1)^l h_1(-y) \right) \\
 =
 C(x) \theta_1^l \left(y \bigm| \frac{\tau}{k}\right)
 \theta_1 \left(x+y \bigm| \frac{\tau}{k}\right) \theta_1 \left(x -y\bigm| \frac{\tau}{k}\right).
 \end{align*}
 As the last two identities have equal left-hand-sides, we derive that
 \[
 \fr{C(y)}{\theta_1^l \left(y \bigm| \frac{\tau}{k}\right)} =
 \fr{C(x)}{\theta_1^l \left(x \bigm| \frac{\tau}{k}\right)},
 \]
 showing that $\fr{C(y)}{\theta_1^l \left(y \bigm| \frac{\tau}{k}\right)}$ is independent of $y$ as well.
 Thus we conclude that
 \[
 C(y) = C\te_1^l \left(y \bigm| \frac{\tau}{k}\right).
 \]
 In fact, the result extends to all complex numbers $x$ and $y$ by analytic continuation.
 This completes the proof.
\end{proof}
\noindent
Note that if $k=1$ and $l=1$, then Theorem~\ref{main-theorem} is \cite[Theorem 1]{Liu-2007}.
Note also that the constant $C$ is obtained by taking the $l$-th derivative derivative with respect to
$y$ in~(\ref{master}) as follows:
\begin{align}\label{Constant}
h_2^{(l)}(0) \left( h_1(x)+ (-1)^l h_1(-x) \right)-h_1^{(l)}(0)\left( h_2(x)+ (-1)^l h_2(-x) \right) \nonumber \\
= C \fr{l!}{2}\left( \te_1' \left(0\bigm| \frac{\tau}{k}\right) \right)^l
\te_1^{2+l}\left(x\bigm| \frac{\tau}{k}\right).
\end{align}
\section{Proof of Theorem \ref{main-result-1}}\label{sec-proof-main-result-1}
Let
\[
h_1(z) = \te_1^4\lt z\bigm|\fr{\tau'}{2} \rt \quad \text{and \quad}
h_2(z) = \te_1^2\lt z\bigm| \fr{\tau'}{4} \rt.
\]
 It can be checked with the help of the elementary
 properties in (\ref{theta-1}) that $h_1(z)$ and $h_2(z)$ both satisfy
the conditions of Theorem~\ref{main-theorem} for $k=l=2$. Then with the relation (\ref{Constant}) at hand
and some obvious simplifications,
the constant $C$ becomes
\[
C = 4 \lt \fr{ \te_1'\lt0\bigm| \fr{\tau'}{4} \rt}{\te_1'\lt0\bigm| \fr{\tau'}{2} \rt} \rt ^2.
\]
Now putting together in the formula (\ref{master}) we get
\begin{align*}
4 \te_1^4\lt x\bigm|\fr{\tau'}{2}\rt \te_1^2\lt y\bigm|\fr{\tau'}{4}\rt
- 4 \te_1^2\lt x\bigm|\fr{\tau'}{4}\rt \te_1^4\lt y\bigm|\fr{\tau'}{2}\rt \\
=
4 \lt \fr{ \te_1'\lt0\bigm| \fr{\tau'}{4} \rt}{\te_1'\lt0\bigm| \fr{\tau'}{2} \rt} \rt ^2
\te_1^2\lt x\bigm|\fr{\tau'}{2}\rt \te_1^2\lt y\bigm|\fr{\tau'}{2}\rt
\te_1\lt x+y\bigm|\fr{\tau'}{2}\rt \te_1\lt x-y\bigm|\fr{\tau'}{2}\rt.
\end{align*}
Letting $x=\fr{\pi}{2}$ in the previous identity yields after simplification
\begin{align}\label{help-1-1}
\te_1^2 \lt y\bigm|\fr{\tau'}{2} \rt \te_1^2 \lt y+\fr{\pi}{2}\bigm|\fr{\tau'}{2} \rt
=
\lt \fr{ \te_1'\lt0\bigm| \fr{\tau'}{2} \rt}{\te_1'\lt0\bigm| \fr{\tau'}{4} \rt} \rt ^2
\te_1^2 \lt \fr{\pi}{2}\bigm|\fr{\tau'}{2} \rt \te_1^2 \lt y\bigm|\fr{\tau'}{4} \rt \nonumber \\
-
\lt \fr{ \te_1'\lt0\bigm| \fr{\tau'}{2} \rt}{\te_1'\lt0\bigm| \fr{\tau'}{4} \rt} \rt ^2
\lt \fr{\te_1 \lt \frac{\pi}{2}\bigm| \fr{\tau'}{4} \rt} {\te_1 \lt \frac{\pi}{2}\bigm| \fr{\tau'}{2} \rt} \rt^2
\te_1^4 \lt y\bigm|\fr{\tau'}{2} \rt.
\end{align}
By formula (\ref{help}) we see that the left-hand-side of (\ref{help-1-1}) equals
\[
\fr{ \te_1^2 \lt 2y\bigm| \tau' \rt \te_1^4 \lt \fr{\pi}{4}\bigm| \fr{\tau'}{2}\rt}
{\te_1^2 \lt \fr{\pi}{2}\bigm| \tau' \rt}.
\]
Therefore, identity (\ref{help-1-1}) means
\[
\te_1^2 \lt 2y\bigm| \tau' \rt \te_1^4 \lt \fr{\pi}{4}\bigm| \fr{\tau'}{2}\rt =
\lt \fr{ \te_1'\lt0\bigm| \fr{\tau}{2} \rt}{\te_1'\lt0\bigm| \fr{\tau}{4} \rt} \rt ^2
\te_1^2 \lt \fr{\pi}{2}\bigm|\fr{\tau'}{2} \rt \te_1^2 \lt y\bigm|\fr{\tau'}{4} \rt
\te_1^2 \lt \fr{\pi}{2}\bigm| \tau' \rt  \]
\[
-
\lt \fr{ \te_1'\lt0\bigm| \fr{\tau'}{2} \rt}{\te_1'\lt0\bigm| \fr{\tau'}{4} \rt} \rt ^2
\lt \fr{\te_1 \lt \frac{\pi}{2}\bigm| \fr{\tau'}{4} \rt} {\te_1 \lt \frac{\pi}{2}\bigm| \fr{\tau'}{2} \rt} \rt^2
\te_1^4 \lt y\bigm|\fr{\tau'}{2} \rt \te_1^2 \lt \fr{\pi}{2}\bigm| \tau' \rt .
\]
Then multiplying both sides of the foregoing formula by
$\fr{\te_1^4 \lt \fr{\pi}{4}\bigm| \fr{\tau'}{2}\rt}{\te_1^4 \lt \fr{\pi}{4}\bigm| \fr{\tau'}{2}\rt}$
yields
\[
\te_1^2 \lt 2y\bigm| \tau' \rt \te_1^4 \lt \fr{\pi}{2}\bigm| \fr{\tau'}{2}\rt
\te_1^2 \lt \fr{\pi}{2}\bigm| \fr{\tau'}{4}\rt
\]
\[
= \lt \fr{ \te_1'\lt0\bigm| \fr{\tau'}{2} \rt}{\te_1'\lt0\bigm| \fr{\tau'}{4} \rt} \rt ^2
\fr{\te_1^6 \lt \frac{\pi}{2}\bigm| \fr{\tau'}{2} \rt \te_1^2 \lt \frac{\pi}{2}\bigm| \fr{\tau'}{4} \rt}
{\te_1^4 \lt \frac{\pi}{4}\bigm| \fr{\tau'}{2} \rt}
\te_1^2 \lt \frac{\pi}{2}\bigm| \tau' \rt \te_1^2 \lt y \bigm| \fr{\tau'}{4} \rt
\]
\[
-
\lt \fr{ \te_1'\lt0\bigm| \fr{\tau'}{2} \rt}{\te_1'\lt0\bigm| \fr{\tau'}{4} \rt} \rt ^2
\fr{\te_1^4 \lt \frac{\pi}{2}\bigm| \fr{\tau'}{4} \rt \te_1^2 \lt \frac{\pi}{2}\bigm| \fr{\tau'}{2} \rt}
{\te_1^4 \lt \frac{\pi}{4}\bigm| \fr{\tau'}{2} \rt}
\te_1^2 \lt \frac{\pi}{2}\bigm| \tau' \rt \te_1^4 \lt y \bigm| \fr{\tau'}{4} \rt
\]
\[
=
\lt \fr{ \te_1'\lt0\bigm| \fr{\tau'}{2} \rt}{\te_1'\lt0\bigm| \fr{\tau'}{4} \rt} \rt ^2
\fr{\te_1^2 \lt \frac{\pi}{2}\bigm| \fr{\tau'}{2} \rt \te_1^2 \lt \frac{\pi}{2}\bigm| \fr{\tau'}{4} \rt}
{\te_1^4 \lt \frac{\pi}{4}\bigm| \fr{\tau'}{2} \rt}
 \te_1^4 \lt \frac{\pi}{2}\bigm| \fr{\tau'}{2} \rt
\te_1^2 \lt \frac{\pi}{2}\bigm| \tau' \rt \te_1^2 \lt y \bigm| \fr{\tau'}{4} \rt
\]
\[
-
\lt \fr{ \te_1'\lt0\bigm| \fr{\tau'}{2} \rt}{\te_1'\lt0\bigm| \fr{\tau'}{4} \rt} \rt ^2
\fr{\te_1^2 \lt \frac{\pi}{2}\bigm| \fr{\tau'}{2} \rt \te_1^2 \lt \frac{\pi}{2}\bigm| \fr{\tau'}{4} \rt}
{\te_1^4 \lt \frac{\pi}{4}\bigm| \fr{\tau'}{2} \rt}
 \te_1^2 \lt \frac{\pi}{2}\bigm| \fr{\tau'}{4} \rt
  \te_1^2 \lt \frac{\pi}{2}\bigm| \tau' \rt  \te_1^4 \lt y \bigm| \fr{\tau'}{4} \rt.
\]
To complete the proof, it remains to show that
\begin{equation}\label{help-1-3}
\lt \fr{ \te_1'\lt0\bigm| \fr{\tau'}{2} \rt}{\te_1'\lt0\bigm| \fr{\tau'}{4} \rt} \rt ^2
\fr{\te_1^2 \lt \frac{\pi}{2}\bigm| \fr{\tau'}{2} \rt \te_1^2 \lt \frac{\pi}{2}\bigm| \fr{\tau'}{4} \rt}
{\te_1^4 \lt \frac{\pi}{4}\bigm| \fr{\tau'}{2} \rt}
=
\frac{1}{4} \lt \fr{\Pi_q}{\Pi_{q^4}} \rt ^2.
\end{equation}
By virtue of the relations (\ref{ImTransf}) and (\ref{key-derivative}) we get
\[
\begin{split}
\fr{\te_1^2\lt \fr{\pi}{2}\bigm| \fr{\tau'}{2}\rt}{\te_1^2\lt \fr{\pi}{4}\bigm| \fr{\tau'}{2}\rt} &=
q^{-\fr{1}{4}} \fr{(q^2;q^4)_{\infty}^4}{(q;q^2)_{\infty}^2}, \\
\fr{\te_1^2\lt \fr{\pi}{2}\bigm| \fr{\tau'}{4}\rt}{\te_1^2\lt \fr{\pi}{4}\bigm| \fr{\tau'}{2}\rt} &=
2 q^{-\fr{1}{4}} \fr{(q^4;q^8)_{\infty}^4 (q^8;q^8)_{\infty}^2}{(q;q^2)_{\infty}^2 (q^4;q^4)_{\infty}^2}, \\
\lt \fr{ \te_1'\lt0\bigm| \fr{\tau'}{2} \rt}{ \te_1'\lt0\bigm| \fr{\tau'}{4} \rt} \rt^2 &=
\fr{1}{8} q^{-1} \fr{(q^4;q^4)_{\infty}^6}{(q^8;q^8)_{\infty}^6}.
\end{split}
\]
Then by the previous three ratios we see that identity (\ref{help-1-3}) is equivalent to
\[
\fr{1}{8} q^{-1} \fr{(q^4;q^4)_{\infty}^6}{(q^8;q^8)_{\infty}^6}  \cdot
q^{-\fr{1}{4}} \fr{(q^2;q^4)_{\infty}^4}{(q;q^2)_{\infty}^2}  \cdot
2 q^{-\fr{1}{4}} \fr{(q^4;q^8)_{\infty}^4 (q^8;q^8)_{\infty}^2}{(q;q^2)_{\infty}^2 (q^4;q^4)_{\infty}^2}
\]
\[ =
\fr{1}{4} q^{-\fr{3}{2}}
\fr{(q^2;q^2)_{\infty}^4 (q^4;q^8)_{\infty}^4}{(q;q^2)_{\infty}^4 (q^8;q^8)_{\infty}^4},
\]
or equivalently,
\[
\fr{1}{4} q^{-\fr{3}{2}} (q^4;q^4)_{\infty}^4 (q^2;q^4)_{\infty}^4 = \fr{1}{4} q^{-\fr{3}{2}} (q^2;q^2)_{\infty}^4,
\]
which is obviously true. This completes the proof.
\section{Proof of Theorem \ref{main-result-2}}\label{sec-proof-main-result-2}
We can check that the functions
$h_1(z) = \te_1^3\lt z\bigm| \fr{\tau'}{3} \rt$ and
$h_2(z) = \te_1 ( 3z\bigm| \tau' )$ satisfy the conditions of Theorem~\ref{main-theorem} for
$k=3$ and $l=1$. Now letting in Theorem~\ref{main-theorem} $x=\fr{\pi}{2}$ and $y=z$,
 and using the formula~(\ref{key-derivative}) give
\begin{align}\label{help-2-0}
4 \te_1^3 \lt \fr{\pi}{2}\bigm| \fr{\tau'}{3} \rt \te_1 (3z\mid \tau') -
4 \te_1^3 \lt z\bigm| \fr{\tau'}{3} \rt \te_1\lt \fr{3\pi}{2}\bigm| \tau' \rt  \nonumber \\
=
12 \fr{\te_1'(0\mid\tau')} {\te_1' \lt 0\bigm| \fr{\tau'}{3} \rt} \te_1 \lt z\bigm| \fr{\tau'}{3} \rt
\te_1 \lt \fr{\pi}{2}+z\bigm| \fr{\tau'}{3} \rt \te_1 \lt \fr{\pi}{2}-z\bigm| \fr{\tau'}{3} \rt
\te_1 \lt \fr{\pi}{2}\bigm| \fr{\tau'}{3} \rt.
\end{align}
Then multiplying both sides of the foregoing identity by $\te_1 \lt \fr{\pi}{2}\bigm| \fr{\tau'}{9} \rt$, using the basic
properties~(\ref{theta-1}), and simplifying we conclude that the last identity means
\begin{align} \label{help-2-1}
\te_1 (3z\mid \tau') \te_1^3 \lt \fr{\pi}{2}\bigm| \fr{\tau'}{3} \rt \te_1 \lt \fr{\pi}{2}\bigm| \fr{\tau'}{9} \rt
+
\te_1^3 \lt z\bigm| \fr{\tau'}{3} \rt \te_1\lt \fr{\pi}{2}\bigm| \tau' \rt \te_1 \lt \fr{\pi}{2}\bigm| \fr{\tau'}{9} \rt
\nonumber \\
=
3 \fr{\te_1'(0\mid\tau')} {\te_1' \lt 0\bigm| \fr{\tau'}{3} \rt} \te_1 \lt z\bigm| \fr{\tau'}{3} \rt
\te_1^2 \lt z+ \fr{\pi}{2}\bigm| \fr{\tau'}{3} \rt \te_1 \lt \fr{\pi}{2}\bigm| \fr{\tau'}{3} \rt
\te_1 \lt \fr{\pi}{2}\bigm| \fr{\tau'}{9} \rt.
\end{align}
On the other hand,
 in Theorem~\ref{main-theorem}, let this time letting,
 \[
 h_1(z) = \te_1^3\lt z\bigm| \fr{\tau'}{3} \rt \quad \text{and\quad}
h_2(z) = \te_1 \lt z\bigm| \fr{\tau'}{9} \rt,
\]
with $k=3$ and $l=1$
 and using the formula~(\ref{key-derivative}) we get
\begin{align*}
4 \te_1^3\lt x\bigm| \fr{\tau'}{3} \rt \te_1\lt y\bigm| \fr{\tau'}{9} \rt -
4 \te_1\lt x\bigm| \fr{\tau'}{9} \rt \te_1^3 \lt y\bigm| \fr{\tau'}{3} \rt \\
=
4 \fr{ \te_1'\lt 0\bigm| \fr{\tau'}{9} \rt}{ \te_1'\lt 0\bigm| \fr{\tau'}{3} \rt} \te_1\lt x\bigm| \fr{\tau'}{3} \rt
\te_1\lt y\bigm| \fr{\tau'}{3} \rt \te_1\lt x+y\bigm| \fr{\tau'}{3} \rt \te_1\lt x-y\bigm| \fr{\tau'}{3} \rt.
\end{align*}
Now make the substitution $x=\fr{\pi}{2}$ and $z=y$ in the previous identity and multiply both sides by
$A_q:=\fr{1}{3} \fr{\Pi_q}{\Pi_{q^9}}$ to obtain
\begin{align} \label{help-2-2}
\te_1\lt z\bigm| \fr{\tau'}{9} \rt \te_1^3\lt \fr{\pi}{2}\bigm| \fr{\tau'}{3}  \rt A_q
-
\te_1^3 \lt z\bigm| \fr{\tau'}{3} \rt \te_1\lt \fr{\pi}{2}\bigm| \fr{\tau'}{9}  \rt A_q  \nonumber \\
=
\fr{ \te_1'\lt 0\bigm| \fr{\tau'}{9} \rt}{ \te_1'\lt 0\bigm| \fr{\tau'}{3} \rt}
\te_1\lt z\bigm| \fr{\tau'}{3} \rt \te_1^2 \lt z+\fr{\pi}{2} \bigm| \fr{\tau'}{3} \rt
\te_1\lt \fr{\pi}{2}\bigm| \fr{\tau'}{3} \rt A_q
\end{align}
Clearly, the desired result holds if and only if the right-hand sides of (\ref{help-2-1}) and (\ref{help-2-2})
are equal. Therefore, we will be done if we derive that
\begin{equation*}
3 \te_1'(0\mid \tau') \te_1\lt \fr{\pi}{2}\bigm| \fr{\tau'}{9} \rt =
\fr{1}{3} \te_1' \lt 0\bigm| \fr{\tau'}{9} \rt \te_1\lt \fr{\pi}{2}\bigm| \tau' \rt \fr{\Pi_q}{\Pi_{q^9}},
\end{equation*}
or equivalently,
\begin{equation}\label{help-2-3}
9 \fr{ \te_1'(0\mid \tau')}{\te_1' \lt 0\bigm| \fr{\tau'}{9} \rt}
\fr{ \te_1\lt \fr{\pi}{2}\bigm| \fr{\tau'}{9} \rt }
{ \te_1\lt \fr{\pi}{2}\bigm| \tau' \rt}
=
\fr{\Pi_q}{\Pi_{q^9}}.
\end{equation}
By an appeal to the relations (\ref{ImTransf}) and (\ref{key-derivative}) we find
\[
\begin{split}
\fr{\te_1\lt \fr{\pi}{2}\bigm| \fr{\tau'}{9}\rt}{\te_1^2\lt \fr{\pi}{2}\bigm| \tau' \rt} &=
\sqrt{9} \fr{(q^9;q^{18})_{\infty}^2 (q^{18};q^{18})_{\infty}}{(q;q^{2})_{\infty}^2 (q^2;q^2)}, \\
 \fr{ \te_1'\lt0\bigm| \tau' \rt}{ \te_1'\lt0\bigm| \fr{\tau'}{9} \rt}  &=
\fr{1}{9 \sqrt{9}} q^{-2} \fr{(q^2;q^2)_{\infty}^3}{(q^{18};q^{18})_{\infty}^3},
\end{split}
\]
which yields
\[
9 \fr{ \te_1'(0\mid \tau')}{\te_1' \lt 0\bigm| \fr{\tau'}{9} \rt}
\fr{ \te_1\lt \fr{\pi}{2}\bigm| \fr{\tau'}{9} \rt }
{ \te_1\lt \fr{\pi}{2}\bigm| \tau' \rt}
=
q^{-2} \fr{ (q^2;q^2)_{\infty}^2 (q^9;q^{18})_{\infty}}{ (q^{18};q^{18})_{\infty}^2 (q;q^2)_{\infty}^2}
= \fr{\Pi_q}{\Pi_{q^9}},
\]
as desired in formula (\ref{help-2-3}).
\section{Proof of Theorem \ref{main-result-3}} \label{sec-proof-main-result-3}
By the formula (\ref{help-2-0}) we have
\begin{align*}
 \te_1 (3z\mid \tau') \te_1^3 \lt \fr{\pi}{2}\bigm| \fr{\tau'}{3} \rt +
\te_1^3 \lt z\bigm| \fr{\tau'}{3} \rt \te_1\lt \fr{\pi}{2}\bigm| \tau' \rt  \\
=
3 \fr{\te_1'(0\mid\tau')} {\te_1' \lt 0\bigm| \fr{\tau'}{3} \rt} \te_1 \lt z\bigm| \fr{\tau'}{3} \rt
\te_1^2 \lt z+ \fr{\pi}{2} \bigm| \fr{\tau'}{3} \rt
\te_1 \lt \fr{\pi}{2}\bigm| \fr{\tau'}{3} \rt.
\end{align*}
Therefore, to prove the theorem we need only show that
\begin{equation}\label{help-3-1}
3 \fr{\te_1'(0\mid\tau')} {\te_1' \lt 0\bigm| \fr{\tau'}{3} \rt}
\fr{\te_1 \lt \fr{\pi}{2}\bigm| \fr{\tau'}{3}  \rt}{\te_1 \lt \fr{\pi}{2}\bigm| \tau' \rt } = \fr{\prod_q}{\prod_{q^3}}.
\end{equation}
By  the idenitites (\ref{ImTransf}) and (\ref{key-derivative}) we have
\[
\begin{split}
\fr{\te_1\lt \fr{\pi}{2}\bigm| \fr{\tau'}{3}\rt}{\te_1^2\lt \fr{\pi}{2}\bigm| \tau' \rt} &=
\sqrt{3} \fr{(q^3;q^{6})_{\infty}^2 (q^{6};q^{6})_{\infty}}{(q;q^{2})_{\infty}^2 (q^2;q^2)}, \\
 \fr{ \te_1'\lt0\bigm| \tau' \rt}{ \te_1'\lt0\bigm| \fr{\tau'}{3} \rt}  &=
\fr{1}{3 \sqrt{3}} q^{\fr{-1}{2}} \fr{(q^2;q^2)_{\infty}^3}{(q^{6};q^{6})_{\infty}^3},
\end{split}
\]
from which we get
\[
3 \fr{ \te_1'(0\mid \tau')}{\te_1' \lt 0\bigm| \fr{\tau'}{3} \rt}
\fr{ \te_1\lt \fr{\pi}{2}\bigm| \fr{\tau'}{3} \rt }
{ \te_1\lt \fr{\pi}{2}\bigm| \tau' \rt}
=
q^{\fr{-1}{2}} \fr{ (q^2;q^2)_{\infty}^2 (q^3;q^{6})_{\infty}^2}{ (q;q^2)_{\infty}^2 (q^{6};q^{6})_{\infty}^2}
= \fr{\Pi_q}{\Pi_{q^3}},
\]
as desired in formula (\ref{help-3-1}). This completes the proof.

\vspace{0.5cm}

\noindent
%
\noindent{\bf Acknowledgment.} The author is grateful to the referee
 for valuable comments and interesting suggestions.

%

\begin{thebibliography}{99}
%
\bibitem{Apostol}
T. M. Apostol,
\emph{Modular functions and Dirichlet series in number theory},
Springer, 1990.
%
%
\bibitem{Berndt}
B. C. Berndt,
\emph{Ramanujan's notebook. III},
Springer-Verlag, 1991.
%
\bibitem{Bachraoui}
M. El Bachraoui,
\emph{Confirming a $q$-trigonometric conjecture of Gosper},
Proc. Amer. Math. Soc., In press,
DOI: https://doi.org/10.1090/proc/13830.
%
\bibitem{Gasper-Rahman}
G. Gasper and M. Rahman,
\emph{Basic Hypergeometric series},
Cambridge University Press, 2004.
%
\bibitem{Gosper}
R. W. Gosper,
\emph{Experiments and discoveries in $q$-trigonometry}, in
Symbolic Computation, Number Theory, Special Functions, Physics and Combinatorics. Editors:
F. G. Garvan and M. E. H. Ismail. Kluwer, Dordrecht, Netherlands, 2001. pp 79-105.
%
\bibitem{Lawden}
D. F. Lawden,
\emph{Elliptic Functions and Applications},
Springer-Verlag, 1989.
%
\bibitem{Liu-2005}
Z.-G. Liu,
\emph{A theta function identity and its implications},
Trans. Amer. Math. Soc. 357:2 (2005), 825-835.
%
\bibitem{Liu-2007}
Z.-G. Liu,
\emph{An addition formula for the Jacobian theta function and its applications},
Adv. Math. 212:1 (2007), 389-406.
%
\bibitem{Mezo}
I. Mez\H{o},
\emph{Duplication formulae involving Jacobi theta functions and Gosper's $q$-trigonometric functions},
Proc. Amer. Math. Soc. 141:7 (2013), 2401-2410.
%
\bibitem{Shen-1}
L.-C. Shen,
\emph{On the additive formulae of the theta functions and a collection of Lambert
series pertaining to the modular equations of degree 5},
Trans. Amer. Math. Soc. 345:1 (1994), 323–345
%
\bibitem{Shen-2}
L.-C. Shen,
\emph{On some modular equations of degree 5},
Proc. Amer. Math. Soc. 123:5 (1995), 1521–1526.
%
\bibitem{Whittaker-Watson}
E.T. Whittaker and G.N. Watson,
\emph{A course of modern analysis},
Cambridge University Press, 1996.

%
%
\end{thebibliography}
\end{document}